\newtheorem{theorem}{Theorem}[section]
\newtheorem{proposition}[theorem]{Proposition}
\theoremstyle{definition}
\newcommand{\sym}[1]{{\sf Sym}\,#1}
\renewcommand{\wr}{\,{\sf wr}\,}
\newcommand{\aut}[1]{{\sf Aut}\,{#1}}
\newcommand{\func}[2]{\mbox{\sf Func}({#1},#2)}
\renewcommand{\leq}{\leqslant}
\begin{document}

\title[Embeddings of permutation groups in wreath 
products]{Embedding permutation groups\\ into wreath products in product action}
\author{Cheryl E. Praeger and Csaba Schneider}
\address{[Praeger] Centre for Mathematics of Symmetry and Computation\\
School of Mathematics and Statistics\\
The University of Western Australia\\
35 Stirling Highway, Crawley\\
Western Australia 6009\\ \and
[Schneider] Centro de \'Algebra da Universidade de Lisboa\\
Av. Prof. Gama Pinto, 2, 1649-003\\ Lisboa, Portugal
}

\email{cheryl.praeger@uwa.edu.au,
csaba.schneider@gmail.com\protect{\newline} {\it WWW:}
www.maths.uwa.edu.au/$\sim$praeger, www.sztaki.hu/$\sim$schneider}

\begin{abstract}
The wreath product of two permutation groups $G\leq\sym\Gamma$ and 
$H\leq\sym\Delta$ can be considered as a permutation group acting on the set 
$\Pi$ of functions from $\Delta$ to $\Gamma$. This action, 
usually called the product action, of a wreath product
plays a very important role in the theory of permutation groups, as 
several classes of primitive or quasiprimitive groups can be described 
as subgroups of such wreath products.
In addition, subgroups of wreath products in product action arise 
as automorphism groups of graph products and codes. 
In this paper we consider subgroups $X$ of full wreath products 
$\sym\Gamma\wr\sym\Delta$ 
in product action. Our main result is that,  in 
a suitable conjugate of
$X$, the subgroup of $\sym\Gamma$ induced by a stabilizer 
of a coordinate $\delta\in\Delta$ only depends on the orbit of $\delta$ under 
the induced action of $X$ on $\Delta$. 
Hence, if the action of $X$ on $\Delta$ is transitive, then
$X$ can be embedded into a much smaller wreath product. 
Further, if this $X$-action is intransitive, then $X$ can be embedded into 
a direct product of such wreath products where the factors of the direct
product correspond to the $X$-orbits in $\Delta$. We offer an application
of the main theorems to  error-correcting codes in Hamming graphs.
\end{abstract}

\thanks{This paper is dedicated to the memory of Alf Van der Poorten.\\
{\it Date:} draft typeset \today\\
{\it 2000 Mathematics Subject Classification:} 05C25, 20B05,
20B25, 20B35, 20D99.\\
{\it Key words and phrases: wreath products, product action,
permutation groups} \\
The first author is supported by Australian Research Council Federation Fellowship FF0776186. The second author acknowledges the support of the grant 
PTDC/MAT/101993/2008 of the  {\em Funda\c c\~ao para a Ci\^ encia e a Tecnologia} (Portugal) and of the Hungarian Scientific Research Fund (OTKA) grant~72845.}

\maketitle

\section{Introduction}\label{s1}

Subgroups of wreath products in product action arise in a number of different contexts. Their importance for group actions is due to the fact that such subgroups give rise to several of the `O'Nan--Scott types' of finite primitive permutation groups (see~\cite[Chapter~2]{dm} or~\cite[Sections~1.10 and~4.3]{cam}) and finite quasiprimitive groups~\cite{p}. They have received special attention recently in the work of Aschbacher~\cite{Asch1,Asch2} aimed at studying intervals in subgroup lattices~\cite{Asch3} (with Shareshian), and of the authors~\cite{recog,transcs,intranscs,PS} investigating invariant cartesian decompositions (with Baddeley). The product action of the wreath product $W=\sym\Gamma\wr \sym\Delta$ is its natural action on the set  $\Pi=\func\Delta \Gamma$ of functions from $\Delta$ to $\Gamma$, described in Subsection~\ref{sub:setup}. If $\Delta=\{1,\dots,m\}$ then $\Pi$ can be identified with the  set  $\Gamma^m$ of ordered $m$-tuples of elements of $\Gamma$, and in this case subgroups of $W$ arise as automorphism groups of various kinds of graph products, as automorphism groups of codes  of length $m$ over the alphabet $\Gamma$ (regarded as subsets of $\Gamma^m$), and  as automorphism groups of a special class of chamber systems in the sense of Tits. To study subgroups $X$ of $W,$ and the structures on which they act, one considers the subgroup $H$ of $\sym\Delta$ induced by $X$ along with the `components' $X^{\Gamma_\delta}$, which are  permutation groups on $\Gamma$, defined in  Subsection~\ref{sub:components}, for each $\delta\in\Delta$. 

We are interested in $X$ up to permutation isomorphism, and wish to replace $X$ by some conjugate in $W$ which gives a simple form with respect to the product action, both for $X$ and the structures on which it acts. This has been done in detail by Kov\'acs in \cite{Kov} in the case where $X$ is primitive on $\Pi$. Kov\'acs also provides a simple form for subgroups inducing a transitive group $H$ on $\Delta$; indeed \cite[(2.2)]{Kov} is the first assertion of Theorem~\ref{embedth2}(b). One way to handle general  subgroups $X$ is to proceed indirectly by appeal to the Embedding Theorem for subgroups of $W$ using a different action, namely its imprimitive action on $\Gamma\times\Delta$ (see for example~\cite[Theorem 8.5]{BMMN}). However this indirect method does not allow us to keep track of important properties of the underlying product structure. For example, if $X$ is an automorphism group of a code $C\subset\Gamma^m$ then we may wish to maintain the property that $C$ contains a specified codeword, say $(\gamma,\dots,\gamma)$ for a fixed $\gamma\in\Gamma$, as well as to obtain a simple form for the group $X$. Thus a direct approach is highly desirable, and the results of this paper provide such an approach. Our main result shows how to choose a form for $X$ so that the $\delta$-component depends only on the $X$-orbit in $\Delta$ containing $\delta$. 

\begin{theorem}\label{embedth2}
Suppose that  $W=\sym\Gamma\wr\sym\Delta$ acts in 
product action on $\func\Delta \Gamma$ with base group $B=\func\Delta {\sym\Gamma}$. 
Let $X\leq W$, 
$\varphi\in\func\Delta\Gamma$ and $\delta_1\in\Delta$. Then the following hold.
\begin{enumerate}
\item[(a)] There is an element $x\in B$ such that the components of $x^{-1}Xx$, as defined in {\rm(\ref{defcomp})}, are constant on each $X$-orbit in $\Delta$. Moreover, if  the $\delta$-component of $X$ is transitive on $\Gamma$ for each $\delta\in\Delta$, then the element $x$ can be chosen to fix $\varphi$.
\item[(b)] If the group $H$ induced by $X$ on $\Delta$ is transitive, and if $G$ is  the $\delta_1$-component of $X$, then the element $x$ may be chosen in $\func \Delta {\sym\Gamma}$ such that $X^x\leq G\wr H$, (and also such that $\varphi x=\varphi$ if $G$ is transitive on $\Gamma$). 
\end{enumerate}
\end{theorem}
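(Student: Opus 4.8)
The plan is to prove both parts at once, by conjugating $X$ by a single element $x\in B$ built from a coset transversal for the point stabilisers of $X$ on $\Delta$. First note that conjugation by an element of $B$ fixes $\Delta$ pointwise, so it changes neither the group $H$ induced on $\Delta$ nor the $X$-orbits in $\Delta$; thus I may argue one $X$-orbit at a time. In each $X$-orbit $\Delta'$ of $\Delta$ fix a point $\delta'$ (in the situation of~(b) there is only the orbit $\Delta$, and I take $\delta'=\delta_1$), and for every $\delta\in\Delta'$ fix $t_\delta\in X$ mapping $\delta'$ to $\delta$ in the action on $\Delta$, with $t_{\delta'}=\id$; such $t_\delta$ exist since $\delta,\delta'$ lie in one orbit. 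Writing $w\in W$ in the notation of Subsection~\ref{sub:setup}, with base-group part $f_w\in B$ and values $f_w(\delta)\in\sym\Gamma$, set $a(\delta):=f_{t_\delta}(\delta')^{-1}$ for $\delta\in\Delta'$ (so $a(\delta')=\id$) and let $x\in B$ be the element with base-group part $a$ and trivial top part.

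The whole argument then runs on one identity, which I would check by a routine computation with the conventions of Subsection~\ref{sub:setup}. Let $w\in X$ and $\delta\in\Delta'$, and let $\delta^w\in\Delta'$ denote the image of $\delta$ under $w$. Conjugating by $x$ sends the value $f_w(\delta)$ to $a(\delta)^{-1}f_w(\delta)\,a(\delta^w)$; that is, the base-group part of $x^{-1}wx$ takes the value $a(\delta)^{-1}f_w(\delta)\,a(\delta^w)$ at $\delta$. Moreover this same element is the value at $\delta'$ of the base-group part of $t_\delta\,w\,t_{\delta^w}^{-1}$. Since $t_\delta$ sends $\delta'$ to $\delta$, $w$ sends $\delta$ to $\delta^w$, and $t_{\delta^w}^{-1}$ sends $\delta^w$ back to $\delta'$, the element $t_\delta\,w\,t_{\delta^w}^{-1}$ lies in the stabiliser $X_{\delta'}$. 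Hence, for every $w\in X$ and every $\delta\in\Delta'$, the value at $\delta$ of the base-group part of $x^{-1}wx$ lies in the component $X^{\Gamma_{\delta'}}$.

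Both parts now follow. For~(a): taking $w\in X_\delta$, so that $\delta^w=\delta$ and $w\mapsto t_\delta\,w\,t_\delta^{-1}$ carries $X_\delta$ onto $X_{\delta'}$, the identity gives $X^{\Gamma_{\delta'}}=a(\delta)^{-1}X^{\Gamma_\delta}a(\delta)$, so the $\delta$-component of $X$ is $a(\delta)X^{\Gamma_{\delta'}}a(\delta)^{-1}$ and that of $x^{-1}Xx$ is $a(\delta)^{-1}\bigl(a(\delta)X^{\Gamma_{\delta'}}a(\delta)^{-1}\bigr)a(\delta)=X^{\Gamma_{\delta'}}$, which depends only on $\Delta'$. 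For~(b), with $\Delta'=\Delta$, $\delta'=\delta_1$ and $G=X^{\Gamma_{\delta_1}}$, the previous paragraph says every value of the base-group part of $x^{-1}wx$ lies in $G$, while its top part lies in $H$; as $w$ ranges over $X$ this is exactly $X^x\le G\wr H$. For the statements about $\varphi$, suppose each $\delta$-component of $X$ (in~(a)), equivalently $G$ (in~(b)), is transitive on $\Gamma$; by the conjugacy formula above each $X^{\Gamma_{\delta'}}$ is then transitive, say $X^{\Gamma_{\delta'}}=G_{\Delta'}$. I would replace $x$ by $xz$, where $z\in B$ has trivial top part and values $e(\delta)\in G_{\Delta'}$ chosen one coordinate at a time so that $xz$ fixes $\varphi$ --- at coordinate $\delta$ this asks $e(\delta)$ to map a prescribed point of $\Gamma$ to $\varphi(\delta)$, which is possible because $G_{\Delta'}$ is transitive. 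This costs nothing: conjugating further by $z$ conjugates the $\delta$-component $G_{\Delta'}$ of $x^{-1}Xx$ by $e(\delta)\in G_{\Delta'}$, hence leaves it equal to $G_{\Delta'}$ (so~(a) survives), and in case~(b), $z\in\func\Delta G\le G\wr H$, so $(X^x)^z\le(G\wr H)^z=G\wr H$.

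The step I expect to require real insight is the construction in the first two paragraphs: choosing $x$ from the transversal, and recognising that $B$-conjugation by this $x$ turns the $\delta$-value of an arbitrary member of $X$ into the $\delta'$-value of a member of the stabiliser $X_{\delta'}$, which is what confines it to the component $X^{\Gamma_{\delta'}}$. Everything after that is bookkeeping, apart from the one point that the correction used to fix $\varphi$ can be taken inside the components $G_{\Delta'}$ --- this is precisely where the transitivity hypothesis is used, and it is what keeps $X^{xz}$ inside $G\wr H$.
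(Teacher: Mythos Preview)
Your proof is correct and follows essentially the same approach as the paper: both conjugate by the element $x\in B$ whose value at $\delta$ is the inverse of the $\delta'$-value of the base-group part of the chosen transversal element $t_\delta$, and both then read off the components of $X^x$ from elements of the stabiliser $X_{\delta'}$. Two minor differences in execution are worth noting: you package what the paper proves as three separate computations (its Claims~2 and~3 and the argument for part~(b)) into the single identity that the $\delta$-value of the base-group part of $x^{-1}wx$ equals the $\delta'$-value of the base-group part of $t_\delta\,w\,t_{\delta^w}^{-1}\in X_{\delta'}$, which is a little more economical; and you arrange for the conjugating element to fix $\varphi$ by post-multiplying by a correction $z\in B$ with values in the components, whereas the paper instead adjusts the choice of the transversal elements $t_\delta$ at the outset (its Claim~1) so that $x$ itself already fixes $\varphi$.
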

  
Note that, in part (b), $G\wr H$ denotes a particular subgroup of $W$ (defined in Subsection~\ref{sub:setup}) and not just an isomorphism class of groups.  
If the subgroup $X$ is transitive on $\Pi$ then all of its components are transitive (Theorem~\ref{transgamma}), so the additional condition on the element $x$ in Theorem~\ref{embedth2} to fix a given point is possible.

\begin{theorem}\label{transgamma}
Let $W=\sym\Gamma\wr\sym\Delta$ act in product action on $\func\Delta \Gamma$ with base group $B=$ $\func\Delta {\sym\Gamma}$, where $\Delta, \Gamma$ are finite sets.
If $X$ is a transitive subgroup of $W$, then each component of $X$ is transitive on $\Gamma$. Moreover, if $X$ acts transitively on $\Delta$ then  each component of the intersection $X\cap B$ is transitive on $\Gamma$.
\end{theorem}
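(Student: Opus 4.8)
We prove the two assertions in turn. Throughout, $H\leq\sym\Delta$ denotes the group induced by $X$ on $\Delta$, so that $X\cap B$ is the kernel of the natural map $X\to H$; for $\delta\in\Delta$ we write $\pi_\delta$ for the projection of the stabiliser $X_\delta$ onto the $\delta$-coordinate of the base group $B$, so that the $\delta$-component of $X$ is $X^{\Gamma_\delta}=\pi_\delta(X_\delta)$. For the first assertion the plan is to reduce to the case that $H$ is transitive on $\Delta$ and then quote Theorem~\ref{embedth2}(b). If $\Delta_1,\dots,\Delta_r$ are the $H$-orbits then $X$ preserves the identification $\Pi=\prod_i\func{\Delta_i}\Gamma$; each projection $\Pi\to\func{\Delta_i}\Gamma$ is $X$-equivariant and onto, so the image $X_i$ of $X$ in $\sym\Gamma\wr\sym{\Delta_i}$ is transitive on $\func{\Delta_i}\Gamma$, and for $\delta\in\Delta_i$ the $\delta$-component of $X$ coincides with that of $X_i$. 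So assume $H$ is transitive on $\Delta$, fix $\delta_1$ and set $G=X^{\Gamma_{\delta_1}}$. By Theorem~\ref{embedth2}(b) there is $x\in B$ with $X^x\leq G\wr H\leq G\wr\sym\Delta$. I would then establish the elementary fact: if a subgroup $Y\leq G\wr\sym\Sigma$ acts transitively on $\Gamma^\Sigma$ in product action then $G$ is transitive on $\Gamma$ --- for if $\Gamma_1,\dots,\Gamma_k$ with $k\geq2$ were the $G$-orbits, the assignment $\varphi\mapsto\bigl(\,|\{\sigma\in\Sigma:\varphi(\sigma)\in\Gamma_j\}|\,\bigr)_{1\leq j\leq k}$ would be a non-constant $G\wr\sym\Sigma$-invariant, contradicting transitivity of $Y$. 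Applying this with $Y=X^x$ and $\Sigma=\Delta$ shows $G=X^{\Gamma_{\delta_1}}$ is transitive on $\Gamma$; as $\delta_1$ was arbitrary, every component of $X$ is transitive.

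For the second assertion, $X$ transitive on $\Delta$ means $H$ is transitive on $\Delta$. Since the conclusion and hypotheses are unaffected by replacing $X$ with a conjugate under $B$ (this replaces each component of $X$ and of $X\cap B$ by a $\sym\Gamma$-conjugate), Theorem~\ref{embedth2}(b) lets me assume $X\leq G_0\wr H$ with $G_0=X^{\Gamma_{\delta_1}}$; because a conjugate of the finite group $G_0$ lying inside $G_0$ equals $G_0$, and because $H$ is transitive, one gets $X^{\Gamma_\delta}=G_0$ for every $\delta$, with $G_0$ transitive on $\Gamma$ by the first assertion. Write $N=X\cap B\trianglelefteq X$ and $N_\delta=\pi_\delta(N)\leq G_0$. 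Since $N\trianglelefteq X_\delta$ and $\pi_\delta\colon X_\delta\to G_0$ is an onto homomorphism, $N_\delta\trianglelefteq G_0$, so the $N_\delta$-orbits on $\Gamma$ form a block system for the transitive group $G_0$; and if $w\in X$ carries $\delta$ to $\delta'$ then a direct computation with the base coordinates of $w$ gives $N_{\delta'}=N_\delta^{\,c}$ for a suitable $c\in G_0$, so (block systems of $G_0$ being $G_0$-invariant) all the $N_\delta$ have one and the same orbit partition of $\Gamma$.

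Now suppose for contradiction that $N_{\delta_1}$ is intransitive; let $\overline\Gamma$ be its orbit set and $k=|\overline\Gamma|\geq2$, so by the previous step $\overline\Gamma$ is the orbit set of every $N_\delta$. Collapsing each $N_\delta$-orbit to a point gives a surjection $\Gamma\to\overline\Gamma$ inducing an $X$-equivariant surjection $\Pi=\Gamma^\Delta\to\overline\Gamma^\Delta$, where $X$ acts on $\overline\Gamma^\Delta$ through the homomorphism $\theta\colon X\to\overline{G_0}\wr H$ obtained by reducing base coordinates modulo the kernel of $G_0\to\sym{\overline\Gamma}$. Each $N_\delta$ lies in that kernel, so $\theta(N)=1$; as $N=\ker(X\to H)$, this forces $\theta(X)\cong H$ with $\theta(X)\to H$ an isomorphism. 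But transitivity of $X$ on $\Pi$ gives transitivity of $\theta(X)$ on $\overline\Gamma^\Delta$, so $k^{|\Delta|}=|\overline\Gamma^\Delta|$ divides $|\theta(X)|=|H|$, which divides $|\Delta|!$. This is impossible for $k\geq2$: for any prime $p\mid k$ one has $v_p(|\Delta|!)<\sum_{i\geq1}|\Delta|/p^i=|\Delta|/(p-1)\leq|\Delta|\leq|\Delta|\,v_p(k)=v_p(k^{|\Delta|})$. Hence $N_{\delta_1}$ is transitive on $\Gamma$, and undoing the conjugation shows every component of $X\cap B$ is transitive.

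I expect the second assertion to be the real difficulty: direct attempts to extract elements of $X\cap B$ from transitivity of $X$ break down because the elements of $X$ witnessing transitivity generally move coordinates of $\Delta$. The decisive step is to factor out the putative common block system of the $N_\delta$, obtaining an action of $X$ on $\overline\Gamma^\Delta$ through a subgroup of order dividing $|\Delta|!$, and then to invoke the elementary bound $k^{|\Delta|}\nmid|\Delta|!$ for $k\geq2$; verifying equivariance of the collapse map and that the $N_\delta$ are normal in $G_0$ with a common orbit partition is the routine but fiddly part.
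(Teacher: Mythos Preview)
Your proof is correct. Both arguments reduce the first assertion to the case where $H$ is transitive on $\Delta$ (via the product decomposition that is Proposition~\ref{intranscd-dirpdt} in the paper), and both reach the same punchline for the second assertion: a divisibility $k^{|\Delta|}\mid |\Delta|!$ with $k\geq 2$, which fails by a $p$-adic count. The routes differ, however, in two respects worth noting.

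First, you invoke Theorem~\ref{embedth2}(b) to conjugate $X$ into $G\wr H$, and you use this for \emph{both} assertions; the paper's proof of Theorem~\ref{transgamma} is entirely independent of Theorem~\ref{embedth2}. There is no circularity in your use, since Theorem~\ref{embedth2} is proved first and does not rely on Theorem~\ref{transgamma}, but the paper's argument is more self-contained. Second, you prove the first assertion by a separate, rather elegant invariant (the multiset of $G$-orbit labels of the coordinate values), whereas the paper handles both assertions in one stroke: once $H$ is transitive it argues directly that the components of $K=X\cap B$ are transitive, and the first assertion follows because $K^{\Gamma_\delta}\leq X^{\Gamma_\delta}$. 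For the second assertion the two arguments are really the same in different clothing: your quotient map $\Gamma^\Delta\to\overline\Gamma^{\,\Delta}$ has as fibres precisely the orbits of the paper's subgroup $L=\prod_{\delta}K^{\Gamma_\delta}$, and your observation $\theta(N)=1$ is the paper's ``$K$ lies in the kernel of the $X$-action on the $L$-orbits''. Incidentally, your block-system step yields more than you state: since $N_\delta\trianglelefteq G_0$ and $c\in G_0$, in fact $N_{\delta'}=N_\delta^{\,c}=N_\delta$, so all the $N_\delta$ are equal, not merely share an orbit partition. The paper, working without the conjugation into $G_0\wr H$, only needs that each $K^{\Gamma_\delta}$ has the same \emph{number} of orbits, which already suffices for the count $s^{|\Delta|}$.
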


In many instances the group $X$ will be far from transitive on $\Pi$, but may still satisfy some transitivity conditions. We give a simple application of Theorem~\ref{embedth2} in the context of codes. It is most conveniently stated using coordinate notation. So $\Delta=\{1,\dots,m\}$ and the code $C$ is a subset of $\Gamma^m$ with automorphism group being the setwise stabiliser $X$ of $C$ in $W=\sym \Gamma\wr\sym \Delta$. The image of $C$ under some element of $W$ is a code {\em equivalent} to $C$. Equivalence preserves most important properties, such as the {\em minimum distance} $d$ of $C$, which is the minimum number of entries in which distinct elements of $C$ differ. For codes in several interesting families, such as completely transitive codes~\cite{GP} and neighbour transitive codes~\cite{Gi} with minimum distance at least $3$, $X$ is transitive on $\Delta$ and all components of $X$ are 2-transitive on $\Gamma$. For such codes with minimum distance $d$, we show that, for our two `favourite' elements $\gamma, \nu$ of $\Gamma$, there is a code equivalent to $C$ containing both the  $m$-tuple $(\gamma^m)$, and the $m$-tuple $(\nu^d,\gamma^{m-d})$ with the first $d$ entries $\nu$ and the remaining entries $\gamma$, while maintaining a simple form for $X$.

\begin{theorem}\label{cor1}
Let $\Delta=\{1,\dots,m\}$, and let $\gamma,\nu$ be distinct elements of 
$\Gamma$. Suppose that $C\subset \Gamma^m$ has minimum distance $d$, cardinality $|C|>1$, and automorphism group $X\leq W= \sym \Gamma\wr\sym \Delta$ such that $X$ induces a transitive group $H$ on $\Delta$ and some component $G$ of $X$ is $2$-transitive on $\Gamma$. Then there exists $x\in W$ such that the equivalent code $C^x$ has automorphism group $X^x\leq G\wr K$ (with $K$ conjugate to $H$ in $\sym\Delta$) and $C^x$ contains the $m$-tuples $(\gamma^m)$ and $(\nu^d,\gamma^{m-d})$.    
\end{theorem}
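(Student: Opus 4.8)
The plan is to realise $x$ as a product $x=\tau x_1x_2x_3$ of four conjugating elements, applied in stages, keeping track at every stage of (the images of) two fixed codewords and of the current conjugate of $X$. To begin, since $|C|>1$ and $C$ has minimum distance $d$, fix $c_1,c_2\in C$ that differ in exactly $d$ coordinates, and write $S\subseteq\Delta$ for the set of these coordinates, so $|S|=d$. \emph{Stage~1.} Choose $\tau\in\sym\Delta\leq W$ with $S\tau=\{1,\dots,d\}$ and replace $(C,X)$ by $(C^\tau,X^\tau)$. Now the images of $c_1$ and $c_2$ differ precisely on $\{1,\dots,d\}$; the group induced on $\Delta$ becomes $H^\tau$, still transitive; and since $\tau$ acts trivially on $\Gamma$, conjugation by $\tau$ merely relabels the components, so the given $2$-transitive component $G$ is still (literally) a component of $X^\tau$, say its $\delta_1$-component. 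Performing this step first is essential: once $X$ is constrained to lie in a subgroup $G\wr H$, the only coordinate permutations at our disposal are those in $H$, which may be too small to move $S$ onto $\{1,\dots,d\}$.

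\emph{Stage~2.} Apply Theorem~\ref{embedth2}(b) to $X^\tau$, whose induced group on $\Delta$ is transitive and whose $\delta_1$-component is $G$: there is $x_1\in B$ with $(X^\tau)^{x_1}\leq G\wr H^\tau$. As $x_1$ acts coordinatewise on $\func\Delta\Gamma$, the two tracked codewords still differ exactly on $\{1,\dots,d\}$. For the last two stages we only conjugate by elements of $\func\Delta G$, the base group of $G\wr H^\tau$, which keeps the current group inside $G\wr H^\tau$. \emph{Stage~3.} Writing the current image of $c_1$ as $(c_1(\delta))_{\delta\in\Delta}$ and using that $G$ is transitive on $\Gamma$, pick $k_\delta\in G$ with $c_1(\delta)k_\delta=\gamma$ for each $\delta$, and set $x_2=(k_\delta)_\delta\in\func\Delta G$. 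Then $x_2$ sends the image of $c_1$ to $(\gamma^m)$; and the image of $c_2$, which agrees with that of $c_1$ off $\{1,\dots,d\}$ and disagrees with it on $\{1,\dots,d\}$, is sent to some $(\alpha_1,\dots,\alpha_d,\gamma^{m-d})$ with every $\alpha_i\neq\gamma$ (as each $k_\delta$ is a bijection). \emph{Stage~4.} Since $G$ is $2$-transitive, $G_\gamma$ is transitive on $\Gamma\setminus\{\gamma\}$, so (using $\nu\neq\gamma$) we may pick $l_i\in G_\gamma$ with $\alpha_i l_i=\nu$ for $i\leq d$; set $l_\delta=\id$ for $\delta>d$ and $x_3=(l_\delta)_\delta\in\func\Delta G$. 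Then $x_3$ fixes $(\gamma^m)$ (every $l_\delta$ fixes $\gamma$) and carries $(\alpha_1,\dots,\alpha_d,\gamma^{m-d})$ to $(\nu^d,\gamma^{m-d})$. Taking $x=\tau x_1x_2x_3$ and $K=H^\tau$, the code $C^x$ then contains $c_1^x=(\gamma^m)$ and $c_2^x=(\nu^d,\gamma^{m-d})$, we have $X^x\leq G\wr K$, and $K$ is conjugate to $H$ in $\sym\Delta$, as required.

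The one point needing genuine care is Stage~1: the set $S$ of ``disagreement'' coordinates must be normalised \emph{before} $X$ is forced into the shape $G\wr H$, since afterwards only the coordinate permutations in $H$ remain available. Everything else is bookkeeping: transitivity of $G$ drives Stage~3, transitivity of $G_\gamma$ on $\Gamma\setminus\{\gamma\}$ (a consequence of $2$-transitivity) drives Stage~4, and the structural content sits entirely in the appeal to Theorem~\ref{embedth2}(b) in Stage~2. In particular the ``fix $\varphi$'' refinements of Theorems~\ref{embedth2} and~\ref{transgamma} are not needed here, because the conjugations in Stages~3 and~4 are already by elements of $\func\Delta G\leq G\wr H^\tau$ and so cannot disturb the containment obtained in Stage~2.
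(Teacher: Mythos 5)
Your proof is correct and uses essentially the same ingredients as the paper's: two codewords at distance $d$, an appeal to Theorem~\ref{embedth2}(b), a coordinate permutation moving the disagreement set to $\{1,\dots,d\}$, and the transitivity of $G_\gamma$ on $\Gamma\setminus\{\gamma\}$; the only difference is the ordering (the paper first sends $a$ to $(\gamma^m)$ by an element of $\prod_\delta X^{\Gamma_\delta}$, then applies Theorem~\ref{embedth2}(b) using its ``fix $\varphi$'' clause, and only afterwards permutes coordinates). Your parenthetical claim that the coordinate permutation \emph{must} come first is not actually true---conjugating $G\wr H$ by an arbitrary $\sigma\in\sym\Delta$ still yields a group of the permitted form $G\wr K$ with $K=H^{\sigma}$, which is exactly what the paper does---but this does not affect the validity of your argument, and your ordering has the mild advantage of not needing the point-stabilising refinement of Theorem~\ref{embedth2}(b).
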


\subsection{Wreath products and the product action}\label{sub:setup}

For our proofs, it is most convenient to use `function notation' for defining the wreath product and its product action. 

Let $\Gamma,\ \Delta$ be sets and let $G,\ H$ be subgroups of $\sym \Gamma$, $\sym \Delta$ respectively.  
Set $B=\func\Delta G$, the set of functions from $\Delta$ to $G$. Then $B$ 
is a group with respect to pointwise
multiplication of its elements:  the product of the functions 
$f$ and $g$ is the function $fg$ that maps $\delta\mapsto (\delta f)(\delta g)$. Moreover $B$ is isomorphic to the direct product of $|\Delta|$ copies of $G$ (or the cartesian product if $\Delta$ is infinite): for $\delta\in\Delta$, set 
\begin{equation}\label{defgdelta}
G_\delta=\{f\in\func\Delta G\ |\  \delta'f=1 \mbox{ for all }\delta'\in\Delta\setminus\{\delta\}\}
\end{equation} 
and define the map $\sigma_\delta:\func \Delta G \rightarrow G_\delta$ by
\begin{equation}\label{defsigdel}
 \sigma_\delta:f\mapsto f_\delta\ \mbox{where}\ \delta'f_\delta = \left\{\begin{array}{cc}
\delta f&\mbox{if $\delta'=\delta$}\\
1  &\mbox{if $\delta'\ne\delta$.}\\
\end{array}\right.
\end{equation}
Then $G_\delta$ is a subgroup isomorphic to $G$, $B$ is the direct product of the subgroups $G_\delta$ (the cartesian product if $\Delta$ is infinite), and $\sigma_\delta$ is the natural projection map $G\rightarrow G_\delta$.

We define a homomorphism $\tau$ from $H$ to $\aut B$: 
for $f\in B$ and $h\in H$ let 
$f (h\tau)$ be the function that maps 
$
\delta\mapsto \delta h^{-1}f.
$
Now the {\em wreath product} $G\wr H$ is defined as the semidirect product
$B\rtimes H$ with respect to the homomorphism $\tau$.
The normal subgroup $B$ is called the {\em base group} of the wreath
product, and $H$ is the {\em top group}. A useful and easy computation shows that 
\begin{equation}\label{conjeq}
(\delta h^{-1})f=\delta f^{h}\quad\mbox{for all}
\quad h\in H,\ f\in\func \Delta G,\ \delta\in\Delta.
\end{equation}

The {\em product action} of $G\wr H$ on $\Pi=\func\Delta\Gamma$ is defined as follows.
Let $f\in\func\Delta G$, $h\in H$ and set $g=fh$. For $\varphi\in\Pi$ 
 we define $\varphi g$ as the function that maps 
$\delta\in\Delta$ to 
\begin{equation}\label{prodactdef}
\delta(\varphi g)=(\delta h^{-1}\varphi) (\delta h^{-1} f).
\end{equation}
Note that $\delta h^{-1}\varphi\in\Gamma$, and $\delta h^{-1}f\in\sym\Gamma$.
Thus $(\delta h^{-1}\varphi) (\delta h^{-1} f)\in\Gamma$, and so
$\varphi g\in\func\Delta\Gamma=\Pi$, as required. It is straightforward to verify that this action  of $G\wr H$ on $\Pi$ is well-defined and faithful (see also~\cite[Section~2.7]{dm}).

Let $\gamma$ be a fixed element of $\Gamma$ and let 
$\varphi$ be the element of $\func\Delta\Gamma$ that maps $\delta\mapsto\gamma$
for all $\delta\in\Delta$. Let us compute the stabiliser $(G\wr H)_\varphi$ 
of $\varphi$ in
$G\wr H$. The subgroup $H\leq (G\wr H)_\varphi$ since, if $h\in H$, then 
$
\delta(\varphi h)=(\delta h^{-1}\varphi)=\gamma.
$
Therefore $(G\wr H)_\varphi=B_\varphi H$. 
Suppose that $f\in B$. Then the image of $\delta\in\Delta$ under $\varphi f$
is
$$
(\delta\varphi)(\delta f)=\gamma(\delta f).
$$
Hence $f\in B_\varphi$ if and only if $\delta f\in G_\gamma$ for all 
$\delta\in \Delta$. Thus 
\begin{equation}\label{wreathstab}
(G\wr H)_\varphi=\{fh\ |\ \delta f\in G_\gamma,\ \mbox{for all}\ \delta\in\Delta,\ h\in H\}=
\func\Delta{G_\gamma} H.
\end{equation}

In order to facilitate our discussion of subgroups of wreath products we 
invoke the language of cartesian decompositions which was 
introduced in~\cite{recog}
and was subsequently used in~\cite{transcs,intranscs,PS} to describe
innately transitive subgroups of wreath products in product action.
Consider the set $\Pi=\func\Delta \Gamma$, and define, for each 
$\delta\in\Delta$, 
a partition $\Gamma_\delta$ of $\Pi$ as follows. Set 
\begin{equation}\label{gammadelta}
\Gamma_\delta=\{\gamma_\delta\ |\ 
\gamma\in\Gamma\},\quad \mbox{where}\quad \gamma_\delta:=\{\varphi\in\Pi\ 
|\ \delta\varphi=\gamma\}.
\end{equation} 
It is routine to check that $\Gamma_\delta$ is indeed a partition 
of $\Pi$. 
Our notation reflects two important facts.  Firstly, the map 
$\delta\mapsto \Gamma_\delta$ is a bijection
between $\Delta$ and $\{\Gamma_\delta\ |\ \delta\in\Delta\}$. Secondly,
for a fixed $\delta\in\Delta$,
the map $\gamma\mapsto\gamma_\delta$ is a bijection between $\Gamma$ 
and $\Gamma_\delta$. For $\gamma\in\Gamma$ and $\delta\in\Delta$, the element 
$\gamma_\delta\in\Gamma_\delta$ can be considered as ``the copy'' of $\gamma$ in
$\Gamma_\delta$, and is usually called the 
\emph{$\gamma$-part of $\Gamma_\delta$}.

The cartesian product 
$\prod_{\delta\in\Delta}\Gamma_\delta$ can be bijectively identified with 
the original 
set $\Pi$. 
Namely, choosing $\gamma_\delta\in\Gamma_\delta$, one for each 
$\delta\in\Delta$, the intersection $\bigcap_{\delta\in\Delta}\gamma_\delta$ 
consists of a single point of $\Pi$, and this gives rise to a 
bijection from the cartesian product $\prod_{\delta\in\Delta}\Gamma_\delta$ 
to $\Pi$.
Therefore, 
in the terminology of~\cite{recog}, the set $\{\Gamma_\delta\,|\,\delta\in\Delta\}$ is called
a {\em cartesian decomposition} of $\Pi$. In fact, this set of partitions is 
viewed as the
natural cartesian decomposition of $\Pi$. 
As $\sym \Gamma\wr\sym\Delta$ is a permutation group acting on 
$\Pi$, the action of $\sym \Gamma\wr\sym\Delta$ 
can be extended to subsets of $\Pi$, 
subsets of subsets, etc. Hence one can consider the action of $\sym \Gamma\wr\sym\Delta$
on the set of partitions of $\Pi$. It is easy to see 
that $\{\Gamma_\delta\ |\ \delta\in\Delta\}$ is invariant under this action, and 
we will see that the $(\sym \Gamma\wr\sym\Delta)$-action on this set is permutationally isomorphic to the induced action of $\sym \Gamma\wr\sym\Delta$
on $\Delta$ (defined in Subsection~\ref{sub:components}) under the bijection $\delta\mapsto\Gamma_\delta$.
The natural product action of $\sym \Gamma\wr\sym\Delta$ 
on $\prod_{\delta\in\Delta}\Gamma_\delta$ is permutationally isomorphic to 
its action on $\Pi$, and indeed the stabiliser in $\sym \Pi$ of this 
cartesian decomposition is the wreath product $\sym \Gamma\wr\sym\Delta$.  
See \cite{recog} for a more detailed discussion.

In the case where $\Delta=\{1,\dots,m\}$, it is worth expressing the product action of the wreath product in coordinate notation. View $\func \Delta G$ and $\Pi=\func\Delta\Gamma$ as $G^m$ and $\Gamma^m$,
respectively. Then, for $(\gamma_1,\ldots,\gamma_m)\in\Gamma^m$ and
$(g_1,\ldots,g_m)h\in G\wr H$, we have that
$$
(\gamma_1,\ldots,\gamma_m)((g_1,\ldots,g_m)h)=(\gamma_{1h^{-1}}g_{1h^{-1}},
\ldots,\gamma_{mh^{-1}}g_{mh^{-1}}).
$$

\subsection{Subgroups of wreath products and their components}\label{sub:components}

Let $X\leq\sym\Gamma\wr\sym\Delta$. We define, for $\delta\in\Delta$, the {\em $\delta$-component} $X^{\Gamma_\delta}$ of $X$ as a subgroup of $\sym\Gamma$ as follows. Recall that each element of $X$ is of the form $fh$, where $f\in\func\Delta\Gamma$ and $h\in\sym\Delta$. Recall also the definition of $\Gamma_\delta$ in (\ref{gammadelta}). 
Now $X$ permutes the partitions $\Gamma_\delta$ and we denote the stabiliser
$\{x\in X \,|\, \Gamma_\delta x=\Gamma_\delta\}$ in $X$ of $\Gamma_\delta$ by $X_{\Gamma_\delta}$. Then   $X_{\Gamma_\delta}=\{fh\in X\,|\,\delta h =\delta\}$, and the $\delta$-component $X^{\Gamma_\delta}$ of $X$ is the image of $X_{\Gamma_\delta}$ in $\sym\Gamma$ under the map $fh\mapsto \delta f$, namely
\begin{equation}\label{defcomp}
 X^{\Gamma_\delta} :=\{\delta f\,|\, \mbox{there exists}\ fh\in X_{\Gamma_\delta}\ \mbox{for some $h$}\}. 
\end{equation}
The bijection $\gamma_\delta\mapsto\gamma$ is equivariant with respect to the actions of $X_{\Gamma_\delta}$ on $\Gamma_\delta$ and $X^{\Gamma_\delta}$ on $\Gamma$. Later (when we define the induced action of $X$ on $\Delta$) we will see that $\Gamma_\delta\, x=\Gamma_{\delta x}$, for $x\in X$.  

In order to prove Proposition~\ref{transgamma}, we need more information about  subgroups of $W=\sym\Gamma\wr\sym\Delta$ which do not act transitively on $\Delta$. 
It turns out that such subgroups $X$ may be viewed as subgroups of a direct product in product action: for sets $\Omega_1$ and $\Omega_2$, and permutation groups $G\leq\sym\Omega_1$ and $H\leq\sym\Omega_2$, the {\em product action} of the direct product $G\times H$ is the natural action of $G\times H$ on $\Omega_1\times\Omega_2$ given by $(g,h):(\omega_1,\omega_2)\mapsto (\omega_1g,\omega_2h)$ for $(\omega_1,\omega_2)\in\Omega_1\times\Omega_2$ and $(g,h)\in G\times H$. We construct a {\em permutational embedding} $(\vartheta,\chi)$ of $X$ acting on $\Pi=\func\Delta \Gamma$ into $\sym\Omega_1\times\sym\Omega_2$ acting on  $\Omega_1\times\Omega_2$, by which we mean a bijection $\chi:\Pi\rightarrow \Omega_1\times\Omega_2$ and a monomorphism $\vartheta:X\rightarrow 
\sym\Omega_1\times\sym\Omega_2$ such that, for all $\varphi\in\Pi$ and all $x\in X$, $(\varphi\,x)\chi = (\varphi \chi){x\vartheta}$. 

For a proper non-empty subset $\Delta'$ of $\Delta$, and an element $\varphi\in\func\Delta\Gamma$, define $\varphi|_{\Delta'}\in\func{\Delta'}\Gamma$ as the restriction of $\varphi$ to $\Delta'$, so 
$\delta\,\varphi|_{\Delta'}=\delta\varphi$ for all $\delta\in\Delta'$. 
For $X\leq\sym \Gamma\wr\sym\Delta$, define the induced action of $X$ on $\Delta$ by $fh: \delta\mapsto \delta h$; equivalently this is the action $x:\delta\mapsto\delta\,x$ defined by $\Gamma_\delta\,x=\Gamma_{\delta x}$.

\begin{proposition}\label{intranscd-dirpdt}
Let $W=\sym\Gamma\wr\sym\Delta$, in product action on $\Pi=\func\Delta \Gamma$, and suppose that $X\leq W$, such that $X$ leaves invariant a proper non-empty subset $\Delta_0$ of $\Delta$ in the induced $X$-action on $\Delta$. Let $\Delta_1=\Delta\setminus\Delta_0$, and set $\Omega_0=\func{\Delta_0} \Gamma$ and $\Omega_1=\func {\Delta_1}\Gamma$. Then the following hold.
\begin{enumerate}
\item[(a)] The map $\vartheta:\Pi\rightarrow \Omega_0\times\Omega_1$ defined by $\varphi\vartheta = (\varphi|_{\Delta_0}, \varphi|_{\Delta_1})$, for $\varphi\in\Pi$,  is a bijection.
\item[(b)] The map $\chi:X\rightarrow \sym\Omega_0\times \sym\Omega_1$ defined by $x\chi=(x_0,x_1)$, where  $\varphi|_{\Delta_i}\,x_i=(\varphi x)|_{\Delta_i}$ for $\varphi|_{\Delta_i}\in\Omega_i$,   is a monomorphism. 
\item[(c)] For $i=0,1$, if $\sigma_i: \sym\Omega_0\times \sym\Omega_1\rightarrow \sym\Omega_i$ is the projection map $(x_0,x_1)\sigma_i=x_i$, then $X\chi\sigma_i$ is contained in $W_i:=\sym\Gamma\wr\sym\Delta_i$, and for each $\delta\in\Delta_i$, the $\delta$-components of $X$ and $X\chi\sigma_i$ are the same subgroup of $\sym\Gamma$.
\item[(d)]  $(\vartheta,\chi)$ is  a permutational embedding  of $X$ on $\Pi$ into the group $\sym\Omega_0\times \sym\Omega_1$ in its product action on $\Omega_0\times\Omega_1$, and $X\chi\leq W_0\times W_1$.
\end{enumerate}
\end{proposition}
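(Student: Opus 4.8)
The plan is to note first that, since $\Delta_0$ is invariant under the induced action of $X$ on $\Delta$, so is its complement $\Delta_1=\Delta\setminus\Delta_0$; hence for every $x=fh\in X$ (with $f\in\func\Delta{\sym\Gamma}$, $h\in\sym\Delta$) the permutation $h$ stabilises both $\Delta_0$ and $\Delta_1$ setwise, so that $\delta h^{-1}\in\Delta_i$ whenever $\delta\in\Delta_i$, and $h$ restricts to a permutation $h|_{\Delta_i}\in\sym{\Delta_i}$ with $(h|_{\Delta_i})^{-1}=(h^{-1})|_{\Delta_i}$. Part (a) is then immediate: since $\Delta$ is the disjoint union $\Delta_0\cup\Delta_1$, a function in $\func\Delta\Gamma$ is uniquely determined by the pair of its restrictions to $\Delta_0$ and $\Delta_1$, and conversely every element of $\Omega_0\times\Omega_1$ arises this way, so $\vartheta$ is a bijection.

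For part (b) the crucial observation is that, by (\ref{prodactdef}), for $x=fh\in X$, for $i\in\{0,1\}$ and $\delta\in\Delta_i$ we have $\delta(\varphi x)=(\delta h^{-1}\varphi)(\delta h^{-1}f)$ with $\delta h^{-1}\in\Delta_i$, so this value depends only on $\varphi|_{\Delta_i}$ (and on $x$). Consequently the rule sending $\psi_i\in\Omega_i$ to the function on $\Delta_i$ given by $\delta\mapsto(\delta h^{-1}\psi_i)(\delta h^{-1}f)$ defines a map $x_i\colon\Omega_i\to\Omega_i$ satisfying $(\varphi x)|_{\Delta_i}=(\varphi|_{\Delta_i})x_i$ for all $\varphi\in\Pi$, and we set $x\chi=(x_0,x_1)$. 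Then $(x^{-1})_i$ is a two-sided inverse of $x_i$, so $x_i\in\sym\Omega_i$; the identity $(\varphi(xy))|_{\Delta_i}=((\varphi x)y)|_{\Delta_i}=\big((\varphi x)|_{\Delta_i}\big)y_i=\big((\varphi|_{\Delta_i})x_i\big)y_i$, valid for all $\varphi$, yields $(xy)_i=x_iy_i$, so $\chi$ is a homomorphism; and if $x\chi=(\id,\id)$ then $(\varphi x)|_{\Delta_i}=\varphi|_{\Delta_i}$ for $i=0,1$ and all $\varphi$, whence $\varphi x=\varphi$ by (a), so $x=1$ because $W$ acts faithfully on $\Pi$. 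Thus $\chi$ is a monomorphism.

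For part (c) I would check directly that $x_i$ coincides with the element $(f|_{\Delta_i})(h|_{\Delta_i})$ of $W_i=\sym\Gamma\wr\sym{\Delta_i}$ acting on $\Omega_i=\func{\Delta_i}\Gamma$ in product action: applying (\ref{prodactdef}) inside $W_i$ and using $(h|_{\Delta_i})^{-1}=(h^{-1})|_{\Delta_i}$ gives precisely the formula $\delta\mapsto(\delta h^{-1}\psi_i)(\delta h^{-1}f)$ that defines $x_i$. Hence $X\chi\sigma_i=\{(f|_{\Delta_i})(h|_{\Delta_i})\mid fh\in X\}\leq W_i$. For the components, fix $\delta\in\Delta_i$; the partition of $\Omega_i$ attached to $\delta$ has $\gamma$-part $\{\psi_i\in\Omega_i\mid\delta\psi_i=\gamma\}$, and $(f|_{\Delta_i})(h|_{\Delta_i})$ stabilises it exactly when $\delta h=\delta$, in which case the associated element of $\sym\Gamma$ is $\delta(f|_{\Delta_i})=\delta f$; comparing with (\ref{defcomp}) applied to $X$, this shows that the $\delta$-component of $X\chi\sigma_i$ equals $X^{\Gamma_\delta}$.

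Finally, part (d) assembles the pieces: $\vartheta$ is a bijection by (a) and $\chi$ is a monomorphism into $\sym\Omega_0\times\sym\Omega_1$ by (b), while the compatibility condition $(\varphi x)\vartheta=(\varphi\vartheta)(x\chi)$ — that is, $\big((\varphi x)|_{\Delta_0},(\varphi x)|_{\Delta_1}\big)=\big((\varphi|_{\Delta_0})x_0,(\varphi|_{\Delta_1})x_1\big)$ — is exactly the defining property of $x_0$ and $x_1$, so $(\vartheta,\chi)$ is a permutational embedding into the product action on $\Omega_0\times\Omega_1$; and $X\chi\leq(X\chi\sigma_0)\times(X\chi\sigma_1)\leq W_0\times W_1$ by (c). I expect the only step requiring genuine care to be the bookkeeping in (c): one must correctly identify each $x_i$ with an actual element of the smaller wreath product $W_i$ and track the natural partitions of $\Omega_i$ so that the $\delta$-components transfer unchanged; everything else is a straightforward unwinding of the definitions of the product action and of $\vartheta$ and $\chi$.
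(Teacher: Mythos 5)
Your proposal is correct, and parts (a), (b) and (d) follow the paper's own argument almost verbatim. The one place you diverge is part (c), and your route there is worth comparing with the paper's. The paper introduces, for each $\delta\in\Delta_i$, the partition $\Gamma^i_\delta$ of $\Omega_i$, checks that $\Gamma^i_\delta\,x_i=\Gamma^i_{\delta x}$, and concludes $X\chi\sigma_i\leq W_i$ because $\{\Gamma^i_\delta\,|\,\delta\in\Delta_i\}$ is the natural cartesian decomposition of $\Omega_i$ and $W_i$ is its full stabiliser in $\sym{\Omega_i}$ — i.e.\ it leans on the cartesian-decomposition characterisation of wreath products recalled in Subsection~\ref{sub:setup}. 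You instead identify $x_i$ explicitly as the element $(f|_{\Delta_i})(h|_{\Delta_i})$ of $W_i$ acting on $\Omega_i$ via (\ref{prodactdef}), using that $h$ stabilises $\Delta_i$ setwise so that $(h|_{\Delta_i})^{-1}=(h^{-1})|_{\Delta_i}$; membership in $W_i$ and the identification of the $\delta$-components then drop out by comparing with (\ref{defcomp}), with no appeal to the stabiliser characterisation. Your version is more computational but entirely self-contained, and it has the side benefit of making explicit the well-definedness of $x_i$ (that $(\varphi x)|_{\Delta_i}$ depends only on $\varphi|_{\Delta_i}$), a point the paper dispatches as ``straightforward to check.'' Both arguments are sound; the paper's is shorter given the machinery it has already set up, yours is more elementary.
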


\begin{proof}
(a) This follows from the definition of the maps $\varphi|_{\Delta_i}$ as restrictions of $\varphi$. 

(b) Let $x\in X$ and $x\chi=(x_0,x_1)$. Note that $\varphi\,x\in\Pi$, for $\varphi\in\Pi$, and hence $(\varphi\,x)|_{\Delta_i}\in\Omega_i$. It is straightforward to check that $\varphi|_{\Delta_i}\mapsto (\varphi\,x)|_{\Delta_i}$ is a bijection $\Omega_i\rightarrow\Omega_i$. Thus $x_i\in\sym\Omega_i$, for each $i$, and the map $\chi$ is well defined. Let also $y\in X$ and $y\chi=(y_0,y_1)$.
It follows immediately from the definition of the $x_i$ and $y_i$ that $(xy)_i=x_iy_i$ for each $i$, and hence that $x\chi y\chi=(xy)\chi$. Thus $\chi$ is a homomorphism. If $x\in\ker\chi$ then, for each $\varphi\in\Pi$ and each $i$, $\varphi|_{\Delta_i}= \varphi|_{\Delta_i}\,x_i=(\varphi x)|_{\Delta_i}$. Thus $\varphi=\varphi x$. Since this holds for all $\varphi\in\Pi$, $x=1$.

(c)  As in (\ref{gammadelta}), for each $\delta\in\Delta_i$ we define a partition
$\Gamma^i_\delta$  of $\Omega_i$ as follows. For $\gamma\in\Gamma$, we define $\gamma^i_\delta=\{\psi\in\Omega_i\,|\,\delta\,\psi=\gamma\}$ and $\Gamma^{i}_\delta=\{\gamma^i_\delta\, |\, 
\gamma\in\Gamma\}$. Since $(\varphi\,x)|_{\Delta_i}=\varphi_{\Delta_i}x_i$ we have $\gamma^i_\delta\,x_i=\gamma_{\delta\,x}^i$ so that $\Gamma^i_\delta\,x_i=\Gamma_{\delta\,x}^i$. Thus $X\chi\sigma_i$ leaves invariant the set of partitions $\{\Gamma^i_\delta\,|\,\delta\in\Delta_i\}$ which forms a cartesian decomposition of $\Omega_i$. Hence $X\chi\sigma_i$ is contained in $W_i$. The stabiliser of $\Gamma^{i}_\delta$ in $X\chi\sigma_i$ is $(X_{\Gamma_\delta})\chi\sigma_i$ and the $\delta$-component of  $X\chi\sigma_i$, defined as in (\ref{defcomp}), is equal to the $\delta$-component $X^{\Gamma_\delta}$ of $X$.

(d) This follows since, for all $\varphi\in\Pi$ and all $x\in X$, we have
\[
\varphi\vartheta\, x\chi =  (\varphi|_{\Delta_0}, \varphi|_{\Delta_1}) x\chi =  ((\varphi x)|_{\Delta_0}, (\varphi x)|_{\Delta_1}) =(\varphi x)\vartheta
\]
and since, by part (c), $X\chi\sigma_i\leq W_i$.
%
\end{proof}

\section{Proof of Theorem~\ref{embedth2}}\label{sec:embedth2}

Suppose that  $W=\sym\Gamma\wr\sym\Delta$ acts in product action on $\Pi=\func\Delta \Gamma$ with base group $B=\func\Delta{\sym\Gamma}$. Let $X\leq W$, 
$\varphi\in\func\Delta\Gamma$ and $\delta_1\in\Delta$. 
Note that $B$ is the kernel of the induced action of $W$ on $\Delta$, so if $x\in B$, then the $X$-orbits in $\Delta$ coincide with the $x^{-1}Xx$-orbits in $\Delta$. For the computations in the proof we often use the properties given in (\ref{conjeq}) and (\ref{prodactdef}), and the equality $\delta (ff')=(\delta f)(\delta f')$,  for $f,f'\in B$, $h\in\sym\Delta$, $\delta\in\Delta$.

Let $\Delta_1,\dots,\Delta_r$ be the $X$-orbits in $\Delta$ under the action induced by $X$ on $\Delta$.
For $1\leq i\leq r$ choose $\delta_i\in\Delta_i$, 
and for each $\delta\in\Delta_i$, choose $t_\delta\in X$ such that $\Gamma_{\delta_i} t_\delta = \Gamma_{\delta}$,
and in particular take $t_{\delta_i}=1$.
Then $t_\delta = f_\delta h_\delta$ with $f_\delta\in B$ and $h_\delta\in\sym\Delta$
such that $\delta_ih_\delta=\delta$.
Also $X_{\Gamma_\delta}=(X_{\Gamma_{\delta_i}})^{t_\delta}$. 

\medskip\noindent\emph{Claim 1:\quad If the $\delta_i$-component is transitive on $\Gamma$, then we may assume in addition that, for each $\delta\in\Delta_i$, $\delta_if_\delta$ fixes the point $\delta\varphi$ of $\Gamma$.}

Since we have $t_{\delta_i}=1$, the element $\delta_if_{\delta_i}$ is the identity of $\sym\Gamma$ and hence fixes $\delta\varphi$.
Let $\delta\in\Delta_i\setminus\{\delta_i\}$ and consider $s_\delta=fh\in X_{\Gamma_{\delta_i}}$ with $f\in B$ and $h\in\sym\Delta$. Then $\delta_ih=\delta_i$, and the element $s_\delta t_\delta$ is equal to $f_\delta'h_\delta'$, with $f_\delta'=ff_\delta^{h^{-1}}$ and $h_\delta'=hh_\delta$, and satisfies  $\Gamma_{\delta_i} s_\delta t_\delta = \Gamma_{\delta}$. Moreover 
$\delta_i f_\delta'=(\delta_if)((\delta_ih)f_\delta)=(\delta_if)(\delta_if_\delta)$, and we note that  $\delta_if\in \sym\Gamma$ lies in the $\delta_i$-component of $X$, see (\ref{defcomp}). If the $\delta_i$-component is transitive on $\Gamma$, then we may choose $s_\delta$ in $ X_{\Gamma_{\delta_i}}$ such that
the element $(\delta_if)((\delta_i)f_\delta)$ fixes $\delta\varphi$. Replacing $t_\delta$ by $s_\delta t_\delta$ gives an element with the required properties.

\medskip\noindent\emph{Claim 2:\quad For $\delta\in\Delta_i$, the $\delta$-component $X^{\Gamma_\delta}$ equals $(X^{\Gamma_{\delta_i}})^{\delta_if_\delta}$.}

Let $\delta_i f\in X^{\Gamma_{\delta_i}}$. By (\ref{defcomp}), there exists $h\in\sym\Delta$ such that
$\delta_i h=\delta_i$ and $fh\in X_{\Gamma_{\delta_i}}$.  Therefore  $X_{\Gamma_\delta}$ contains 
\begin{eqnarray*}
(fh)^{t_\delta}&=&f^{t_\delta} h^{f_\delta h_\delta} = f^{t_\delta}(f_\delta^{-1}f_\delta^{h^{-1}}h)^{h_\delta}
= f^{t_\delta} (f_\delta^{-1})^{h_\delta}f_\delta^{h^{-1}h_\delta}h^{h_\delta}.
 \\
\end{eqnarray*}
This implies that the $\delta$-component $X^{\Gamma_\delta}$ contains 
\begin{eqnarray*}
 \delta (f^{t_\delta}(f_\delta^{-1})^{h_\delta}f_\delta^{h^{-1}h_\delta})&=&
((\delta h_\delta^{-1})f^{f_\delta}) ((\delta h_\delta^{-1})f_\delta^{-1}) ((\delta h_\delta^{-1}h)f_\delta)\\
\end{eqnarray*}
and using the facts that $\delta_i h_\delta = \delta$ and $\delta_i h=\delta_i$, this is equal to
\[
((\delta_i)f^{f_\delta}) ((\delta_i)f_\delta^{-1}) ((\delta_i) f_\delta) = \delta_i(f^{f_\delta}f_\delta^{-1} f_\delta)=\delta_i f^{f_\delta}=(\delta_i f)^{\delta_i f_\delta}.
\]
Thus $X^{\Gamma_\delta}$ contains $(X^{\Gamma_i})^{\delta_if_\delta}$, and a similar argument proves the reverse inclusion. Hence equality holds and the claim is proved.

\medskip\noindent
\emph{Definition of $x$:}\quad Define $x\in B=\func\Delta{\sym\Gamma}$ as the function satisfying, for each $i$ and each $\delta\in\Delta_i$, $\delta x = \delta_i f_\delta^{-1}$. If all components of $X$ are transitive on $\Gamma$ then we assume (as we may by Claim 1) in addition that, for each $i$ and  $\delta\in\Delta_i$, $\delta_if_\delta$ fixes the point $\delta\varphi$, and hence $\delta x=\delta_if_\delta^{-1}=(\delta_i f_\delta)^{-1}$ fixes $\delta\varphi$. Thus in this case $x$ fixes $\varphi$.

\medskip\noindent\emph{Claim 3:\quad The components of $x^{-1}Xx$ are constant on each of the $\Delta_i$.}

Since $x$ acts trivially on $\Delta$, the stabiliser
$(X^x)_{\Gamma_\delta}=(X_{\Gamma_\delta})^x$ for each $\delta\in\Delta$.
Thus $\delta f$ lies in the $\delta$-component $X^{\Gamma_\delta}$ if and only if there exists $h\in\sym \Delta$ such that $fh\in X_{\Gamma_\delta}$ or equivalently, $(fh)^x=f^xx^{-1}x^{h^{-1}}h\in (X^x)_{\Gamma_\delta}$. This implies that the $\delta$-component of $X^x$ contains 
\[
 \delta (f^xx^{-1}x^{h^{-1}}) = \delta(x^{-1}fx^{h^{-1}}) = (\delta x^{-1}) (\delta f)((\delta h)x) = (\delta f)^{\delta x}
\]
 since $\delta h=\delta$. Thus the $\delta$-component of $X^x$ contains $(X^{\Gamma_\delta})^{\delta x}$
and a similar argument proves the reverse inclusion, so equality holds. Now $\delta x=\delta_if_\delta^{-1}=(\delta_i f_\delta)^{-1}$, which by Claim 2 conjugates $X^{\Gamma_\delta}$ to $X^{\Gamma_{\delta_i}}$. Thus
\[
 (X^x)^{\Gamma_\delta} = (X^{\Gamma_\delta})^{\delta x} = (X^{\Gamma_\delta})^{\delta_i f_\delta^{-1}}
=X^{\Gamma_{\delta_i}}
\]
for all $\delta\in\Delta_i$. This completes the proof of Claim 3, and part (a) follows. 

\medskip
To prove part (b) we assume that the group $H$ induced by $X$ on $\Delta$ is transitive, and let $G$ be  the $\delta_1$-component of $X$. From what we have just proved, each component of $X^x$ is equal to $G$. Let $g'$ be an arbitrary element of $X^x$. Then $g'=x^{-1}gx$ for some $g\in X$, and we have $g=fh$ with $f\in B$ and $h\in \sym\Delta$. By the definition of $H$ we have $h\in H$. Also 
\[
 g'=x^{-1}fhx=(x^{-1}fx^{h^{-1}})h = f'h,\ \mbox{say}.
\]
Thus, in order to prove that $g'\in G\wr H$, it is sufficient to prove that, for each $\delta\in\Delta$, 
$\delta f'\in G$.


Let $\delta':=\delta h$. Then $hh_{\delta'}^{-1}h_\delta$ fixes $\delta$, and so
$X_{\Gamma_\delta}$ contains
\[
 gt_{\delta'}^{-1}t_\delta = fhh_{\delta'}^{-1}f_{\delta'}^{-1}f_\delta h_\delta 
=f(f_{\delta'}^{-1}f_\delta)^{h_{\delta'}h^{-1}} hh_{\delta'}^{-1}h_\delta.
\]
Hence $(X^x)_{\Gamma_\delta}=(X_{\Gamma_\delta})^x$ contains
\[
 x^{-1} gt_{\delta'}^{-1}t_\delta x=( x^{-1} f(f_{\delta'}^{-1}f_\delta)^{h_{\delta'}h^{-1}}x^{h_{\delta}^{-1}h_{\delta'}h^{-1}}) hh_{\delta'}^{-1}h_\delta
\]
which equals $f'' hh_{\delta'}^{-1}h_\delta$ say.
This means that the $\delta$-component $G$ of $X^x$ contains
\begin{eqnarray*}
 \delta f''
&=& (\delta x^{-1} f)((\delta hh_{\delta'}^{-1})(f_{\delta'}^{-1}f_\delta))((\delta hh_{\delta'}^{-1}h_\delta)x)\\
&=& (\delta x^{-1}f)((\delta_1(f_{\delta'}^{-1}f_\delta))(\delta x).
\end{eqnarray*}   
By the definition of $x$, $\delta_1(f_{\delta'}^{-1}f_\delta)=(\delta_1 f_{\delta'}^{-1})(\delta_1 f_\delta)
= (\delta' x)(\delta x)^{-1}$. It follows that
\[
 \delta f''=(\delta x^{-1}f)(\delta'x)=(\delta x^{-1}f)(\delta x^{h^{-1}})=\delta(x^{-1}fx^{h^{-1}})=\delta f'.
\]
Therefore $\delta f'\in G$, as required. Thus part (b) is proved, completing the proof of Theorem~\ref{embedth2}.

\section{Proof of Theorem~\ref{transgamma}}\label{sec:prop}

Let $W=\sym\Gamma\wr\sym\Delta$ act in product action on $\Pi=\func\Delta \Gamma$ with base group $B=$ $\func\Delta {\sym\Gamma}$, where $\Delta,\, \Gamma$ are finite sets. Suppose that $X$ is a transitive subgroup of $W,$ and let $K:=X\cap B$.

Let $\delta\in\Delta$, let $\Delta_0$ be the orbit of $X$ in $\Delta$ containing $\delta$, and let $\Delta_1=\Delta\setminus\Delta_0$. By Proposition~\ref{intranscd-dirpdt}~(d), the permutation actions 
of $X$ on $\Pi$ and on $\Omega_0\times\Omega_1$ are equivalent. In particular, as $X$ is transitive on $\Pi$, its projection $X\sigma_0$ is transitive on $\Omega_0$. Further (defining $\Gamma^0_\delta$ as in the proof of Proposition~\ref{intranscd-dirpdt}~(c)), if $X^{\Gamma^0_\delta}$ is transitive then, by Proposition~\ref{intranscd-dirpdt}~(c),  $X^{\Gamma_\delta}$ is transitive. Thus it suffices to prove that all components of $X$ are transitive in the case where $X$ acts transitively on $\Delta$. So assume that $X$ is transitive on $\Delta$. Let $r:=|\Delta|$, and suppose that, for some $\delta\in\Delta$, the $\delta$-component $X^{\Gamma_\delta}$ is intransitive. Now $K$ is a normal subgroup of $X_{\Gamma_\delta}$ and hence, by (\ref{defcomp}),  the $\delta$-component $K^{\Gamma_\delta}$ of $K$ is a normal subgroup of $X^{\Gamma_\delta}$. Hence $K^{\Gamma_\delta}$ has $s$ orbits in its action on $\Gamma$ for some $s>1$. Since $X$ is transitive on $\Delta$ and normalises $K$, it follows that  $K^{\Gamma_\delta}$ has $s$ orbits for each $\delta\in\Delta$. 
Define $L:=\{f\in B\,|\,\delta f\in K^\delta\ \mbox{for each}\ 
\delta\in\Delta\}$.  Then $L\cong\prod_{\delta\in\Delta}K^\delta$, $L$ has $s^r$ orbits in $\Pi$, and $K\leq L\cap X$. Moreover $X$ normalises $L$ and, since $X$ is transitive on $\Pi$, $X$ permutes the $s^r$ orbits of $L$ transitively and $K$ lies in the kernel of this action. Thus $|X/K|$ is divisible by $s^{r}$. However $X/K$ is isomorphic to the transitive group induced by $X$ on $\Delta$ and hence $|X/K|$ divides $r!$. Thus $s^r$ divides $r!$. 
However this is impossible since for any prime $p$ dividing $s$, the order of a Sylow $p$-subgroup of $\sym\Delta$ is at most $p^{r-1}$. Thus $s=1$. This proves both assertions of Theorem~\ref{transgamma}.

\section{Proof of Theorem~\ref{cor1}}\label{sec:cor}

Let $\Delta=\{1,\dots,m\}$, and let $\gamma,\nu$ be distinct elements of $\Gamma$. Suppose that $C\subset \Gamma^m$ has minimum distance $d$, cardinality 
$|C|>1$, and automorphism group $X\leq W= \sym \Gamma\wr\sym \Delta$ such that
 $X$ induces a transitive group $H$ on $\Delta$ and the $1$-component 
$X^{\Gamma_1}$ of $X$ is a $2$-transitive subgroup $G$ of $\sym\Gamma$. In this 
context it is convenient to identify $\Pi=\func \Delta\Gamma$ with $\Gamma^m$, 
and the base group $B$ of $W$ with $(\sym\Gamma)^m$. Under this identification, 
for example, the subgroup $L=\{f\in B\,|\,\delta f\in X^{\Gamma_\delta}\ 
\mbox{for all}\ \delta\in\Delta\}$ of $B$ is identified with the direct product 
$\prod_{\delta\in\Delta} X^{\Gamma_\delta}$ of the components of $X$. Moreover, 
since $X$ acts transitively on $\Delta$, each of the $X^{\Gamma_\delta}$ is 
2-transitive on $\Gamma$.   

Let $a:=(\gamma_1,\ldots,\gamma_m), b:= (\beta_1,\ldots,\beta_m)\in C$ be codewords at distance $d$. Since $G$ is transitive on $\Gamma$, the subgroup $L$ of the base group $B$ is transitive on $\Gamma^m$, so there is an element $x_1\in L$ such that $a\,{x_1}=(\gamma^m)$. Then, since $x_1$ normalises each of the direct factors $X^{\Gamma_\delta}$ of $L$, it follows that $X^{x_1}$ has the same components as $X$. Now we apply Theorem~\ref{embedth2}~(b) and obtain an element $x_2\in B$ such that $X^{x_1x_2}\leq G\wr H$ and  $a\,{x_1x_2}=(\gamma^m)x_2=(\gamma^m)$. Now the image $b\,{x_1x_2}$ differs from $(\gamma^m)$ in exactly $d$ entries. Let $I$ denote this $d$-subset of $\Delta$. Choose $x_3$ in the top group $\sym\Delta$ of $W$ such that $I\,{x_3}=\{1,\dots,d\}$. Then $C\,{x_1x_2x_3}$ contains $a\,{x_1x_2x_3}=(\gamma^m){x_3}=(\gamma^m)$ and $b\,{x_1x_2x_3}$, and the latter $m$-tuple differs from $(\gamma^m)$ precisely in the $d$-subset $I\,{x_3}=\{1,\dots,d\}$. Thus 
entries $d+1,\dots,m$ of  $b\,{x_1x_2x_3}$ are all equal to $\gamma$. The automorphism group $X^{x_1x_2x_3}$ of $C\,{x_1x_2x_3}$ has the same components as $X^{x_1x_2}$ (which are all equal to $G$) and induces the transitive group $K:=H^{x_2}$ on $\Delta$. Thus $X^{x_1x_2x_3}\leq G\wr K$.
Finally, since $G$ is 2-transitive on $\Gamma$, for each $i\leq d$ there is an element $y_i\in G_\gamma$ which maps the $i^{th}$ entry of $b\,{x_1x_2x_3}$ to $\nu$. Let $x_4\in \func\Delta {G_\gamma} \leq B$ be any element such that $ix_4=y_i$ for $i=1,\dots,d$, and set $x=x_1x_2x_3x_4$. Then $X^x\leq G\wr K$ and $C\,x$ contains $(\gamma^m)$ and $(\nu^d,\gamma^{m-d})$.

\section[]{Acknowledgement}
The authors are grateful to an anonymous referee for suggestions leading to clarification and improvement of the exposition.

\end{document}